\documentclass[12pt]{article}
\usepackage{amsmath,amsthm,amsfonts,amssymb}
\usepackage{booktabs}
\usepackage{algpseudocode}
\usepackage{varwidth}
\usepackage{colortbl}
\usepackage{enumerate}
\usepackage{hyperref}
\usepackage{pgfplotstable}
\usepackage{pgfplots}
\usepackage[a4paper,includeheadfoot,margin=2.54cm]{geometry}

\newtheorem{theorem}{Theorem}
\newtheorem{lemma}[theorem]{Lemma}

\theoremstyle{remark}

\theoremstyle{definition}

\newcommand{\FF}{\mathbb{F}}

\newcommand{\Sp}{Sp}

\title{Switching for Small Strongly Regular Graphs}
\author{Ferdinand Ihringer\thanks{Department of Mathematics: Analysis, Logic and Discrete Mathematics, Ghent University, Belgium, \href{mailto:ferdinand.ihringer@ugent.be}{ferdinand.ihringer@ugent.be}.
}}

\pgfplotstableset{int detect,
every even row/.style={before row={\rowcolor[gray]{0.9}}},
every head row/.style={before row=\toprule,after row=\midrule},
every last row/.style={after row=\bottomrule},
columns/nr/.style={column type=r,column name={SRGs}}}

\pgfplotsset{width=14cm,height=8cm,compat=1.16}
\pgfkeys{/pgf/number format/.cd,int detect}

\begin{document}


\maketitle

\begin{abstract}
We provide an abundance of strongly regular graphs (SRGs) for certain parameters $(n, k, \lambda, \mu)$ with $n < 100$. 
For this we use Godsil-McKay (GM) switching with a partition of type $4,n-4$ and Wang-Qiu-Hu (WQH) switching with a partition
of type $3,3,n-6$ or $4,4,n-8$. In most cases, we start with a highly symmetric graph which belongs to a finite geometry.
Many of the obtained graphs are new; for 
instance, we find \pgfmathprintnumber{16565438}
strongly regular graphs with parameters $(81, 30, 9, 12)$
while only 15 seem to be described in the literature.

We provide statistics about the size of the occurring automorphism groups.
We also find the recently discovered Kr\v{c}adinac partial geometry, thus finding a
third method of constructing it.
\end{abstract}

\section{Introduction}

\begin{quote}
  Strongly regular graphs lie on the cusp between highly structured and unstructured. For example, there is a unique strongly regular graph with parameters (36, 10, 4, 2), but there are 32548 non-isomorphic graphs with parameters (36, 15, 6, 6). \hfill {\footnotesize Peter Cameron, ``Random Strongly Regular Graphs?''\nocite{Cameron2003a}}
\end{quote}

A strongly regular graph (SRG) is a $k$-regular graph with $n$ vertices such that any two adjacent vertices have $\lambda$ common neighbors,
while any two non-adjacent vertices have $\mu$ common neighbors \cite{Hubaut1975}. 
The tuple $(n, k, \lambda, \mu)$ is called the parameter set of an SRG.
SRGs are interesting for many reasons. Their existence relates to several combinatorial
objects such as Steiner triple systems, quasi-symmetric designs, rank 3 permutation groups,
and partial geometries. See \cite{Brouwer2020} for a recent survey. They are also an important class of graphs for
isomorphism testing \cite{Babai1980,Spielman1996} as they are often hard to distinguish which makes it interesting to have many SRGs with the same parameters.

Our main aim is to provide an abundance of small SRGs which
can be used to test various conjectures in graph theory. For instance, researchers test conjectures by using
Spence's collection of SRGs \cite{SpenceHP}. Sometimes these are later refuted, cf.~\cite{Godsil2017}.
A larger selection of easily accessible SRGs as well as an 
easy method to generate them will hopefully lead to better conjectures.
An additional motivation is that 
\cite{Brouwer2020} cites some of the data of this document
and we want to provide a proper reference.

\medskip 

Let us recall special cases of Godsil-McKay (GM) switching \cite{Godsil1982} and Wang-Qiu-Hu (WQH) switching \cite{Wang2019}, cf.~\cite{Ihringer2019a}.

\begin{theorem}[GM Switching]\label{thm:gm}
    Let $\Gamma$ be a graph whose vertex set is partitioned as $C \cup D$.
    Assume that the induced subgraph on $C$ is regular. Suppose that each $x\in D$
    either has $0$, $|C|/2$, or $|C|$ neighbours in $C$.
    Construct a new graph $\overline{\Gamma}$ by switching adjacency and non-adjacency between $x \in D$
    and $C$ when $|\Gamma(x) \cap C| = |C|/2$.
    Then $\Gamma$ and $\overline{\Gamma}$ are cospectral.
\end{theorem}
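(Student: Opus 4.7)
The plan is to realise $\overline{\Gamma}$ as the image of $\Gamma$ under an explicit orthogonal similarity, which makes cospectrality immediate. Set $c = |C|$ and order the vertices of $\Gamma$ so that
\[
A = \begin{pmatrix} B & N \\ N^{\top} & E \end{pmatrix},
\]
where $B$ and $E$ are the adjacency matrices of the induced subgraphs on $C$ and $D$ respectively, and $N \in \{0,1\}^{c \times |D|}$ records the edges between $C$ and $D$. The switching leaves $B$ and $E$ untouched, and replaces each column $N_x$ of $N$ satisfying $|\Gamma(x) \cap C| = c/2$ by its complement $j - N_x$, where $j$ denotes the all-ones vector of length $c$. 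Denote the resulting matrix by $\overline{A}$.

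Define $Q = \tfrac{2}{c} J_c - I_c$, where $J_c$ is the all-ones $c \times c$ matrix, and set $\tilde{Q} = \mathrm{diag}(Q, I_{|D|})$. The three ingredients to verify are: \emph{(a)} using $J_c^2 = c J_c$, a direct computation gives $Q^2 = I_c$, so $Q$ and $\tilde{Q}$ are symmetric orthogonal involutions; \emph{(b)} the regularity of the induced subgraph on $C$ is equivalent to $B J_c = J_c B = r J_c$ for some integer $r$, and a one-line calculation then yields $Q B Q = B$; \emph{(c)} for each column $N_x$ of $N$ one has $Q N_x = N_x$ when $|\Gamma(x) \cap C| \in \{0, c\}$ and $Q N_x = j - N_x$ when $|\Gamma(x) \cap C| = c/2$, since $Q N_x = \tfrac{2}{c}(|\Gamma(x)\cap C|) j - N_x$. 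The constant $2/c$ in the definition of $Q$ is precisely what forces this trichotomy to produce $\{0,1\}$-vectors.

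Combining these three facts, $\tilde{Q} A \tilde{Q}$ has the same diagonal blocks as $A$, while its off-diagonal blocks equal $N$ with exactly the required columns complemented; that is, $\tilde{Q} A \tilde{Q} = \overline{A}$. Since $\tilde{Q}$ is an orthogonal involution, $A$ and $\overline{A}$ are similar and hence cospectral. There is no substantial obstacle in this argument; the only genuine content is guessing the correct matrix $Q$. Conceptually, the regularity hypothesis on $C$ is used only so that $\tilde{Q}$ commutes with the $B$-block of $A$, while the $\{0, c/2, c\}$ condition on the column sums of $N$ is exactly what keeps $\tilde{Q} A \tilde{Q}$ a $\{0,1\}$-matrix, i.e.\ the adjacency matrix of an honest graph.
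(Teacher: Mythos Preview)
Your argument is correct and is in fact the classical Godsil--McKay proof: the orthogonal involution $\tilde Q=\mathrm{diag}\bigl(\tfrac{2}{c}J_c-I_c,\,I_{|D|}\bigr)$ conjugates $A$ to $\overline{A}$, using regularity of $C$ for $QBQ=B$ and the column-sum trichotomy for $QN_x$. All three verifications \emph{(a)}--\emph{(c)} are accurate as written.

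There is nothing to compare against, however, because the paper does not prove Theorem~\ref{thm:gm}; it is stated as a known result and attributed to Godsil and McKay~\cite{Godsil1982}. The only place where the paper touches this circle of ideas is the proof of Lemma~\ref{lem:am_proof}, and there the matrix $P=\mathrm{diag}\bigl(\tfrac12(J-I),I\bigr)$ that appears is exactly your $\tilde Q$ specialised to $c=4$. So your approach is fully consistent with what the paper uses implicitly, and it is the standard proof from the original reference.
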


\begin{theorem}[WQH Switching]\label{thm:wqh}
    Let $\Gamma$ be a graph whose vertex set is partitioned as $C_1 \cup C_2 \cup D$.
    Assume that the induced subgraphs on $C_1, C_2,$ and $C_1 \cup C_2$ are regular,
    and that the induced subgraphs on $C_1$ and $C_2$ have the same size and degree. Suppose that each $x\in D$
    either has the same number of neighbors in $C_1$ and $C_2$, or $\Gamma(x) \cap (C_1 \cup C_2) \in \{ C_1, C_2 \}$.
    Construct a new graph $\overline{\Gamma}$ by switching adjacency and non-adjacency between $x \in D$
    and $C_1 \cup C_2$ when $\Gamma(x) \cap (C_1 \cup C_2) \in \{ C_1, C_2 \}$.
    Then $\Gamma$ and $\overline{\Gamma}$ are cospectral.
\end{theorem}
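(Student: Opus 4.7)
The plan is to mimic the standard proof of GM switching by exhibiting an orthogonal involution $Q$ with $Q^{T} A Q = \overline{A}$, where $A$ and $\overline{A}$ are the adjacency matrices of $\Gamma$ and $\overline{\Gamma}$. Writing $A$ as a $3 \times 3$ block matrix with rows and columns ordered by $C_1, C_2, D$, I take $Q = \begin{pmatrix} P & 0 \\ 0 & I \end{pmatrix}$, where, setting $m = |C_1| = |C_2|$,
\[
P = \begin{pmatrix} I - \tfrac{1}{m} J & \tfrac{1}{m} J \\ \tfrac{1}{m} J & I - \tfrac{1}{m} J \end{pmatrix}.
\]
A short computation using $J^{2} = mJ$ shows $P^{T} = P$ and $P^{2} = I$, so $Q$ is an orthogonal involution; consequently equal spectra of $A$ and $Q^T A Q$ will be immediate once I identify $Q^T A Q$ with $\overline{A}$.

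Two things then need checking. First, that $P$ implements the switching on the off-diagonal block $\begin{pmatrix} N_1 \\ N_2 \end{pmatrix}$ between $D$ and $C_1 \cup C_2$: if the column indexed by $x \in D$ equals $\mathbf{1}_{C_1}$ (resp.\ $\mathbf{1}_{C_2}$), then $P$ sends it to $\mathbf{1}_{C_2}$ (resp.\ $\mathbf{1}_{C_1}$), and if this column has equal sums on $C_1$ and $C_2$, then $P$ fixes it. Both statements are direct computations using $J\mathbf{1} = m\mathbf{1}$, and they are exactly why this block form of $P$ was chosen: it reproduces the switching rule given in the statement.

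The real obstacle is showing that the $C_1 \cup C_2$ block is unchanged, i.e., that $P$ commutes with the induced adjacency matrix $B'$ on $C_1 \cup C_2$. My approach here is spectral. The $-1$-eigenspace of $P$ is the one-dimensional subspace spanned by $v := \mathbf{1}_{C_1} - \mathbf{1}_{C_2}$, and its orthogonal complement is the $+1$-eigenspace. The regularity hypotheses then give, with $k_1$ the common internal degree of $C_1$ and $C_2$ and $k$ the degree inside $C_1 \cup C_2$, that $B_{11}\mathbf{1} = k_1 \mathbf{1} = B_{22}\mathbf{1}$ and that the $C_1$-$C_2$ bipartite block has both row- and column-sums $k - k_1$; computing $B'v$ block by block then gives $B'v = (2k_1 - k)\,v$. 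Thus $v$ is a $B'$-eigenvector, and since $B'$ is symmetric its orthogonal complement is also $B'$-invariant, so $B'$ preserves both eigenspaces of $P$ and therefore commutes with $P$. I expect this to be the only delicate step, because it is precisely where the \emph{global} regularity of $C_1 \cup C_2$, as opposed to that of $C_1$ and $C_2$ individually, is used.

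Combining the two verifications yields $Q^{T} A Q = \overline{A}$, and orthogonality of $Q$ concludes that $\Gamma$ and $\overline{\Gamma}$ are cospectral.
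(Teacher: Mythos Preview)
Your argument is correct. The matrix $P$ you define is indeed a symmetric orthogonal involution, the verification that $P$ swaps $\mathbf{1}_{C_1}$ and $\mathbf{1}_{C_2}$ while fixing columns with equal $C_1$- and $C_2$-weight is accurate, and your eigenspace argument for $PB'P=B'$ is clean: the regularity hypotheses force $v=\mathbf{1}_{C_1}-\mathbf{1}_{C_2}$ to be a $B'$-eigenvector, and symmetry of $B'$ then makes $v^\perp$ invariant, giving commutation.

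There is, however, nothing to compare against: the paper does not supply its own proof of this theorem. It is stated as a recalled result with attribution to Wang, Qiu, and Hu \cite{Wang2019} (cf.\ also \cite{Ihringer2019a}), and is used as a black box throughout. Your approach via conjugation by a rational orthogonal matrix is precisely the method of the original source---indeed, the title of \cite{Wang2019} already signals ``regular rational orthogonal matrices of level $p$''---so what you have written is essentially a specialisation of the proof in the cited reference to the two-cell case. In that sense your proof is the expected one, just not one the present paper chose to include.
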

Cospectral SRGs have the same parameters, so WQH switching applied to an SRG yields an SRG with the same parameters.
We say that we apply WQH switching with a partition of type $\ell,\ell,n-2\ell$ 
if $|C_1| = |C_2| = \ell$.
The aim of this paper is to provide a large collection of SRGs which can be generated by WQH
switching with a partition of type $2,2,n-4$, a partition of type $3,3,n-6$, or 
a partition of type $4,4,n-8$. 

Note that WQH switching with a partition of type
$2,2,n-4$ produces a graph isomorphic to a graph with Godsil-McKay switching 
if $C = C_1 \cup C_2$.
As this is mentioned in both \cite{Brouwer2020} 
and \cite{Ihringer2019a} without proof,
let us include one provided personally due to Munemasa \cite{Munemasa2018}.

\begin{lemma}\label{lem:am_proof}
  Theorem \ref{thm:gm} and Theorem \ref{thm:wqh}
  produce isomorphic graphs if $C = C_1 \cup C_2$ and $|C|=4$.
\end{lemma}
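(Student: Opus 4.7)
My plan is to exhibit an explicit isomorphism between the two switched graphs that is the identity on $D$ and a single global involution on $C$. Write $C_1=\{a_1,a_2\}$, $C_2=\{b_1,b_2\}$, let $\sigma=(a_1\,a_2)(b_1\,b_2)$, and define $\phi\colon V(\Gamma)\to V(\Gamma)$ to be $\sigma$ on $C$ and the identity on $D$. The claim to verify is then that $\phi$ is an isomorphism from the WQH-switched graph $\overline{\Gamma}_{\mathrm{WQH}}$ to the GM-switched graph $\overline{\Gamma}_{\mathrm{GM}}$.

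First I would sort $D$ by the pair $(|\Gamma(x)\cap C_1|,|\Gamma(x)\cap C_2|)$; combining the hypotheses of Theorems~\ref{thm:gm} and~\ref{thm:wqh} forces this pair into $\{(0,0),(2,0),(1,1),(0,2),(2,2)\}$. On the ``boundary'' types $(0,0),(2,0),(0,2),(2,2)$ the two switchings agree---each does nothing in the extremes and swaps $C_1\leftrightarrow C_2$ in the middle two---and in each case the resulting $\Gamma(x)\cap C$ is $\sigma$-invariant, so the corresponding $D$-to-$C$ edges match under $\phi$. The only discrepancy is on the ``mixed'' type $(1,1)$: if $\Gamma(x)\cap C=\{a_i,b_j\}$, then WQH does not switch whereas GM replaces this neighborhood by its complement $\{a_{3-i},b_{3-j}\}$; since that complement is exactly $\sigma(\{a_i,b_j\})$, we still get $x\sim_{\overline{\Gamma}_{\mathrm{WQH}}} c \Leftrightarrow x\sim_{\overline{\Gamma}_{\mathrm{GM}}} \sigma(c)$ for every $c\in C$.

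It remains to check that $\sigma$ is an automorphism of the induced subgraph $\Gamma[C]$, since edges inside $C$ are not altered by either switching and edges inside $D$ are preserved trivially. The graphs $\Gamma[C_1]$ and $\Gamma[C_2]$ are regular, hence are both edgeless or both a single edge, and $\sigma$ stabilises each. The cross-edges between $C_1$ and $C_2$ form a biregular bipartite graph with parts of size two, so their edge set equals $\emptyset$, one of the two perfect matchings $\{a_1b_1,a_2b_2\}$ or $\{a_1b_2,a_2b_1\}$, or $K_{2,2}$; each of these four options is stabilised by $\sigma$.

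The main point is the single observation that, on a mixed vertex, the GM correction swaps each $C_i$ internally---exactly the action of $\sigma$ restricted to $C_i$---so a single global involution converts WQH into GM. Once that is noticed the rest is bookkeeping; the only potential difficulty, the verification that $\sigma\in\Aut(\Gamma[C])$, reduces to the short enumeration above because $|C|=4$ leaves only finitely many regular configurations to consider.
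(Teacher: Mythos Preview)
Your proof is correct. Both you and the paper exhibit the very same isomorphism—the permutation that is the identity on $D$ and $(a_1\,a_2)(b_1\,b_2)$ on $C$, which in the paper's notation is the block matrix $\left(\begin{smallmatrix}P_1 & 0\\0 & I\end{smallmatrix}\right)$—but the routes differ. Munemasa's argument is algebraic: it uses that GM and WQH switching are realized by conjugating the adjacency matrix by $P$ and $R$ respectively, and then verifies the $4\times 4$ identities $Q_1Q_2Q_3=\tfrac12 J-I$ and $Q_2Q_3=P_2P_3=P_1$ to conclude that the $C$-blocks of $P$ and $R$ differ by the permutation $P_1$, whence $PAP$ and $RAR$ are permutation-conjugate. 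You instead verify the isomorphism combinatorially, classifying $x\in D$ by the profile $(|\Gamma(x)\cap C_1|,|\Gamma(x)\cap C_2|)$ and checking in each of the five cases that the GM and WQH neighbourhoods in $C$ differ exactly by $\sigma$; the short enumeration showing $\sigma\in\Aut(\Gamma[C])$ handles the edges inside $C$. Your approach is more elementary—no matrix realization of switching is needed—and it makes transparent \emph{why} $\sigma$ is the right map: on a mixed $(1,1)$ vertex, GM complements the neighbourhood within each $C_i$, which is precisely the action of $\sigma$. The paper's argument is terser and more uniform, and its matrix viewpoint is the natural one if one wishes to search for analogous relations when $\ell>2$.
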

\begin{proof}[Proof due to Munemasa]
Let $I$ and $J$ denote the identity
matrix and all-ones matrix, respectively.
Let $P_1$ be the permutation matrix for $(1,2)(3,4)$,
$P_2$ the permutation matrix for $(1,3)(2,4)$,
and $P_3$ the permutation matrix for $(1,4)(2,3)$.
Put $Q_i = \frac12 (J - 2P_i)$.
Put
\begin{align*}
 &P = \begin{pmatrix} \frac12 (J -I) & 0\\ 0 & I\end{pmatrix},
 && R = \begin{pmatrix} Q_1 & 0\\ 0 & I\end{pmatrix}.
\end{align*}
Let $A$ be the adjacency matrix of $\Gamma$.
Suppose that $C = C_1 \cup C_2$
corresponds to the first four vertices of 
$A$. Then the graph $\overline{\Gamma}_1$ from 
Theorem \ref{thm:gm} has adjacency matrix $PAP$,
and the graph $\overline{\Gamma}_2$ from 
Theorem \ref{thm:wqh} has adjacency matrix $RAR$.
We have $Q_1Q_2Q_3 = \frac12 J-I$, and $Q_2Q_3 = P_2P_3$
is a permutation.
Hence, $PAP$ is a permutation of $RAR$.
Thus, $\overline{\Gamma}_1$ and $\overline{\Gamma}_2$
are isomorphic.
\end{proof}

It was shown 
in several papers, for instance \cite{Abiad2016,Ihringer2019a},
that GM and WQH switching work well for several families of SRGs. Here we present a more thorough investigation for small parameter sets.
Note that WQH switching was almost observed in Definition 3 of \cite{Behbahani2012} by Behbahani, Lam, and \"{O}sterg\aa{}rd.
This led to a similar investigation.

\medskip 

Table \ref{tab:total} summarizes our results.
Write GM($m$) (respectively, WQH$_\ell$($m$)) if we apply WQH switching up to $m$ times 
with a partition of type $2,2,n-4$ (respectively, $\ell,\ell,n-2\ell$) to our seed graph.

\begin{table}
\centering 
\pgfplotstabletypeset[row sep=\\,col sep=&,
.style={column type=r},
columns/Parameters/.style={string type,column type=l,column name={$(n,d,c,a)$}},
columns/Number/.style={column type=r,column name=\#},
columns/Type/.style={string type,column type=l},
columns/Seed/.style={string type,column type=l},
columns/Previously/.style={string type,column type=l,column name={$\gg$?}}
]{
Parameters & Number & Type & Seed & Previously \\
$(57,24,11,9)$ & 31490375 & GM($9$) & $S(2,3,19)$ & no\\
$(63, 30, 13, 15)$ & 13505292 & GM($5$) & $Sp(6, 2)$ & no \\
$(64,21,8,6)$ & 76323 & GM($\infty$) & $Bilin(2,3,2)$ & no \\
$(64,27,10,12)$ & 8613977 & GM($5$) & $VO^-(6,2)$ & yes \\
$(64,28,12,12)$ & 11063360 & GM($5$) & $VO^+(6,2)$ & maybe \\
$(70,27,12,9)$ & 78900835 & GM($10$) & $S(2,3,21)$ & no \\
$(81,24,9,6)$ & 7441608 & WQH$_3$($6$) & $VNO^+_4(3)$ & maybe \\
$(81,30,9,12)$ & 16565438 & WQH$_3$($\infty$) & $VNO^-_4(3)$ & yes\\
$(81,32,13,12)$ & 21392603 & WQH$_3$($6$) & $Bilin(2,2,3)$ & maybe\\
$(85,30,3,5)$ & 237787 & WQH$_4$($5$) & $Sp(4,4)$ & yes \\
$(96,19,2,4)$ & 178040 & WQH$_4$(6) & $Haemers(4)$ & maybe \\
$(96,20,4,4)$ & 133005 & WQH$_4$(6) & $GQ(5, 3)$ & maybe\\
}
\caption{The number of generated graphs.}
\label{tab:total}
\end{table}

Definitions of the graphs are in the corresponding subsections.
The last column ``$\gg$?'' contains a binary statement yes/no
to state whether (as far as the author is aware) 
the number of graphs 
constructed here is much larger than those
found in the literature. References
are given in the corresponding
subsections. We write ``maybe'' when
there are not many graphs in the literature,
but at least one construction, which in general 
is known to be prolific in some sense,
is associated with the given set of parameters.
Note that exact counts
are out of the scope of this note;
for instance, for parameters $(64, 27, 10, 12)$
there are at least 6 different methods
of constructing such SRGs, see \cite{Brouwer2020}, 
and it is not clear
how many nonisomorphic graphs these yield.

We provide the number of new graphs after each switching step and the automorphism group sizes for all graphs.
All graphs can be found on the homepage of the author in Nauty's graph6 format: \url{http://math.ihringer.org/srgs.php}.
There we also provide selected versions of the C program used.

\section{Finding Partitions and Other Technicalities}

Our investigation itself uses the folklore 
method of keeping a global record 
of {\it canonical representatives} of graphs
for isomorphy rejection, see \cite[\S4.2.1]{Kaski2006} for the general technique.

\smallskip
The canonical representative of a graph
is given by McKay's and Piperno's nauty-traces \cite{McKay2014}. 
A tiny self-written C program applies the switching.
We also use nauty-traces to calculate the sizes of the automorphism groups.
We use cliquer by Östergård \cite{OestergaardCliquer} to calculate clique numbers in some cases.
In two cases we use the default SRG with the corresponding parameters from Sage \cite{sagemath},
relying on Cohen's and Pasechnik's implementation of Brouwer's SRG database \cite{Cohen2017}.
Due to hardware constraints, we usually end our search at around 10 million SRGs.
A particular emphasis was put on parameters $(70,27,12,9)$ as the existence of a partial 
geometry $pg(6,6,4)$ is open.

\smallskip
We want to calculate all graphs which we can
obtain from a seed graph $\Gamma_0$
by applying a chosen type of switching 
up to $i$ times. We describe the general method
in the following:

\medskip 

\begin{algorithmic}[1]
\State 
    \begin{varwidth}[t]{0.7\textwidth}
Replace the seed graph $\Gamma_0$ 
by its canonical representative.
Note that there are many canonical forms for graphs
and one has to use the same method throughout
the whole algorithm.
\end{varwidth}
\medskip 

\State $T \gets \{ \Gamma_0 \}$, $C \gets \{ \Gamma_0 \}$, $j \gets 0$
\For{ $j < i$ }
  \State $N \gets \emptyset$
  \For{$\Gamma \in C$}
    \State 
    \begin{varwidth}[t]{0.7\textwidth}
    Calculate the set $M_\Gamma$ of graphs
    which can be obtained by applying the chosen switching
    to $\Gamma$. See \S\ref{sec:GM_alg}
    and \S\ref{sec:WQH_alg} for details.
    \end{varwidth}
    \medskip 
    \State 
    \begin{varwidth}[t]{0.7\textwidth}
    Calculate the set $N_\Gamma$ of canonical representatives
    of the graphs in $M_\Gamma$. 
    Note that $M_\Gamma$ might contain 
    distinct, but isomorphic graphs, while $N_\Gamma$ cannot.
    \end{varwidth}
    \medskip 
    \State $N \gets N \cup N_\Gamma$
  \EndFor
  \State $C \gets N \setminus T$
  \State $T \gets T \cup C$
  \State $j \gets j+1$
\EndFor
\State Now $T$ is the set of all canonical representatives
      of graphs which can be obtained from $\Gamma_0$
      by applying the chosen switching up to $i$ times.
\end{algorithmic}

\medskip 

%

Let us explain $T$, $C$, and $N$:
At the beginning of the outer for-loop,
$T$ (as in {\it total}) is the set of graphs after applying
the chosen switching $j$ times;
$C$ (as in {\it current}) is the set of graphs in $T$
to which the chosen switching was not yet applied.
The inner for-loop applies the chosen
switching to all elements in $C$ and collects
their canonical representatives
in $N$ (as in {\it new}).

\medskip 

Our partition finding method is very simple and described below. It uses simple pruning techniques.
Our vertex set is labeled $V=\{ 1, \ldots, n \}$ and the adjacency matrix of the graph is $A$.

\subsection{Type \texorpdfstring{$2,2,n-4$}{2,2,n-4}}\label{sec:GM_alg}

For WQH switching with a partition of type $2,2,n-4$, we implemented GM switching with a partition of type $4,n-4$.
The partition $C \cup D$ has to satisfy the following:
\begin{enumerate}[(A)]
 \item The induced subgraph on $C$ is regular.\label{it:gm1}
 \item All $x \in D$ satisfy $|\Gamma(x) \cap C| \in \{ 0, 2, 4 \}$.\label{it:gm2}
 \item There exists an $x\in D$ with $|\Gamma(x) \cap C| = 2$.\label{it:gm3}
\end{enumerate}
The last condition is not stated in Theorem \ref{thm:gm} above, 
but otherwise $\Gamma = \overline{\Gamma}$.

Most of our generated graphs have no symmetries,\footnote{
We have no a priori reason for this.} 
so we naively iterate through all $4$-tuples
$(c_1, c_2, c_3, c_4)$ with $c_1 < c_2 < c_3 < c_4$ in a nested loop.
We only check the conditions in the inner loop. 
First we check for \eqref{it:gm1} as it (naively) only involves accessing up to $|C|(|C|-1)=12$ entries of $A$,
while \eqref{it:gm2} and \eqref{it:gm3} might access up to $|D| \cdot |C| = 4(n-4)$ entries of $A$.

\subsection{Type \texorpdfstring{$\ell,\ell,n-2\ell$}{l,l,n-2l}}
\label{sec:WQH_alg}

For WQH switching with a partition of type $\ell,\ell,n-2\ell$, the partition $C_1 \cup C_2 \cup D$ has to satisfy the following:
\begin{enumerate}[(A)]
 \item The induced subgraph on $C_1$ is regular for some degree $k_1$.\label{it:wqh1}
 \item The induced subgraph on $C_2$ is regular with the same degree $k_1$.\label{it:wqh2}
 \item The bipartite subgraph between $C_1$ and $C_2$ (with the edges of $\Gamma$) is regular.\label{it:wqh3}
 \item All $x \in D$ satisfy $|\Gamma(x) \cap C_1| = |\Gamma(x) \cap C_2|$ or $\Gamma(x) \cap (C_1 \cup C_2) \in \{ C_1, C_2 \}$.\label{it:wqh4}
 \item The second case of \eqref{it:wqh4} occurs.\label{it:wqh5}
\end{enumerate}
While Theorem \ref{thm:wqh} asks for the induced subgraph in $C_1 \cup C_2$ to be regular, in light of \eqref{it:wqh1} and 
\eqref{it:wqh2}, testing for \eqref{it:wqh3} suffices and is faster.

\medskip 

Suppose that $C_1 = \{ c_1, \ldots, c_\ell \}$
and $C_2 = \{ c_{\ell+1}, \ldots, c_{2\ell} \}$.
We pick $c_1, \ldots, c_{2\ell}$ in order, where 
$c_1 < \ldots < c_\ell$ and 
$c_1 < c_{\ell+1} < \ldots < c_{2\ell}$.
Write $\tilde{C}_m = \{ c_1, \ldots, c_m \}$.

\smallskip 

Let $k_{11}(m)$ (respectively, $k_{22}(m)$) be the minimal degree 
of the induced subgraph on $\tilde{C}_m$
(respectively, $\tilde{C}_m \setminus \tilde{C}_\ell$)
and let $K_{11}(m)$ (respectively, $K_{22}(m)$) be the maximal degree
of the induced subgraph on $\tilde{C}_m$
(respectively, $\tilde{C}_m \setminus \tilde{C}_\ell$).
We discard $\tilde{C}_m$ if 
$K_1(m) - k_1(m) > \ell-m$ for $m \leq \ell$
as then \eqref{it:wqh1} is impossible.
Similarly, we discard $\tilde{C}_m$ if
$K_2(m) - k_2(m) > 2\ell-m$ for $m > \ell$
as then \eqref{it:wqh2} is impossible.

\smallskip 

Suppose $\{i,j\} = \{1,2\}$ and $m > \ell$.
Consider the bipartite graph with parts
$\tilde{C}_\ell$ and $\tilde{C}_m \setminus \tilde{C}_\ell$
(with the edges as in $\Gamma$).
Let $k_{12}(m)$ be the minimal degree
on $\tilde{C}_\ell$, $k_{21}(m)$
the minimal degree on $\tilde{C}_m \setminus \tilde{C}_\ell$,
$K_{12}(m)$ the maximum degree on $\tilde{C}_\ell$,
and $K_{12}(m)$ the maximum 
degree on $\tilde{C}_m \setminus \tilde{C}_\ell$.
We discard $\tilde{C}_m$ if $K_{21}(m) > k_{21}(m)$
(we already picked all vertices of $\tilde{C}_\ell$,
so all degrees in $\tilde{C}_m \setminus \tilde{C}_\ell$
must be the same by \eqref{it:wqh3}).
We also discard $\tilde{C}_m$ 
if $K_{12}(m) - k_{12}(m) > 2\ell - m$
as otherwise \eqref{it:wqh3} is impossible.

\smallskip 

For \eqref{it:wqh4} and \eqref{it:wqh5}
we only test in the inner loop after $C_1$ and $C_2$
are fully chosen.

\section{SRGs}\label{sec:data}

In this section we present the generated SRGs.
We apply switchings of type GM and WQH$_3$
to all the discussed graphs. If any of them 
does not work, then we try to apply 
switchings of type WQH$_4$.
We mention precisely the cases for which 
our technique produces SRGs
which are nonisomorphic to the used 
seed graph.

\subsection{Very Small Parameters}

SRGs with very small parameters are discussed in \cite{Behbahani2012}.
For instance, there are at least 342 SRGs with parameters $(49, 18, 7, 6)$ and one has a GM switching class of size 175.

\subsection{SRG(57,24,11,9)}

There is a one-to-one correspondence between Steiner triple systems and SRGs derived from a Steiner triple system \cite{Kaski2004,Spielman1996}.
Particularly, the complete classification of Steiner triple systems of order 19 \cite{Kaski2004} yields 
a large amount of SRGs with parameters $(57, 24, 11, 9)$. All of our graphs might be included in the 
$\pgfmathprintnumber{11084874829}$ SRGs of \cite{Kaski2004}.
Similarly to \cite[Theorem 4]{Ihringer2019a} one can see that certain cycle switches of designs (see \cite{Kaski2011})
can be interpreted as WQH switchings.
Particularly, the so-called Pasch switching corresponds to WQH switching with a partition
of type $2,2,n-4$, that is GM switching with a partition of type $4,n-4$.
To our knowledge, this is first observed in \cite{Behbahani2012}.
There it is also observed that GM switching can lead to non-geometric SRGs.
The authors of \cite{Behbahani2012} only 
find \pgfmathprintnumber{338536} SRGs by GM switching 
which is small compared to our number.

\smallskip

The number of generated SRGs after applying GM switching up to $i$ times can be found in Table \ref{tab:57_24_11_9}.

\begin{table}[htbp]
\centering
\pgfplotstabletypeset[font=\footnotesize,row sep=\\,col sep=&,
.style={column type=r},
columns/GM/.style={string type,column type=l,column name=$i$}
]{
GM    & 0 & 1 & 2  & 3  & 4  & 5  & 6 & 7 & 8 & 9\\
New   & 1 & 9 & 102 & 829 & 5408 & 31409 & 171607 & 913192 & 4826290 & 25541528 \\
Total & 1 & 10 & 112 & 941 & 6349 & 37758 & 209365 & 1122557 & 5948847 & 31490375 \\
}

\caption{SRGs with parameters $(57,24,11,9)$.}
\label{tab:57_24_11_9}
\end{table}

The automorphism group sizes 
can be found in Table \ref{tab:aut_57_24_11_9}.
The first row denotes the size of the automorphism
group, the second row denotes the numbers of SRGs
with an automorphism group of that size.

\pgfplotstableread{aut_szs_srg_57_24_11_9}\tabautSRGone
\pgfplotstabletranspose[header=true]\Target{\tabautSRGone}

\begin{table}[htbp]
\centering 
\pgfplotstabletypeset[
every head row/.style={ 
        output empty row,
        before row={%
            \toprule
        }
    }
,
columns/colnames/.style={string type,column type=l}
]{\Target}
\caption{Automorphism group sizes of SRGs with parameters $(57,24,11,9)$.}

\label{tab:aut_57_24_11_9}
\end{table}

\subsection{SRG(63, 30, 13, 15)}\label{sec:63_30_13_15}

Let us give a short description
of the collinearity graph of $\Sp(2d, q)$: Vertices are $1$-dimensional subspaces of $\FF_q^{2d}$. Two $1$-dimensional subspaces are adjacent
if they are perpendicular with respect to the bilinear form $x_1y_2 - x_2y_1 + \ldots + x_{2d-1}y_{2d} - x_{2d}y_{2d-1}$.
For $(d,q) = (3, 2)$, this graph has the desired parameters.
WQH switching works for $\Sp(2d, q)$, see \cite{Abiad2016} for $q=2$ and \cite{Ihringer2019a} for the general case.

The graph $\Sp(6, 2)$ has an automorphism group of size $\pgfmathprintnumber{1451520}$, clique number $7$ and coclique number $7$.
SRGs with the same parameters as $\Sp(6, 2)$ have spectrum $(30,3^{35},-5^{27})$, clique number at most $7$ and coclique number at most $9$.
More details on $\Sp(6, 2)$ can be found in \cite[\S10.21]{Brouwer2020}.

At most \pgfmathprintnumber{522079} SRGs are known from 
intersection-$8$ graphs of quasi-symmetric 
$2$-$(36, 16, 12)$ designs \cite{KV2016},
\pgfmathprintnumber{4653} SRGs with these parameters in \cite{Krcadinac2000}, 
at least 9 SRGs with these parameters in \cite{ABH2019},
and one more SRG in \cite{Abiad2016}. 

\smallskip 

The number of graphs after applying GM switching up to $i$ times
can be found in Table \ref{tab:63_30_13_15}.

\begin{table}[htbp]

\centering

\pgfplotstabletypeset[row sep=\\,col sep=&,
.style={column type=r},
columns/GM/.style={string type,column type=l,column name=$i$}
]{
GM    & 0 & 1 & 2  & 3    & 4      & 5\\
New   & 1 & 2 & 52 & 3275 & 254097 & 13247865 \\
Total & 1 & 3 & 55 & 3330 & 257427 & 13505292 \\
}
\caption{SRGs with parameters $(63, 30, 13, 15)$.}
\label{tab:63_30_13_15}
\end{table}

The automorphism group sizes can be found in Table \ref{tab:aut_63_30_13_15} and Figure \ref{fig:aut_63_30_13_15}.
The first column denotes the size of the automorphism
group, the second column denotes the numbers of SRGs
with an automorphism group of that size.

\pgfplotstableread{aut_szs_srg_63_30_13_15}\tabaut

\begin{table}[htbp]
\begin{center}
\footnotesize 
\pgfplotstabletypeset[
columns={sz,nr,sz,nr,sz,nr,sz,nr,sz,nr},
columns/sz/.style={column type=r,column name={$|G|$}},
display columns/0/.style={select equal part entry of={0}{5}},
display columns/1/.style={select equal part entry of={0}{5}},
display columns/2/.style={select equal part entry of={1}{5}},
display columns/3/.style={select equal part entry of={1}{5}},
display columns/4/.style={select equal part entry of={2}{5}},
display columns/5/.style={select equal part entry of={2}{5}},
display columns/6/.style={select equal part entry of={3}{5}},
display columns/7/.style={select equal part entry of={3}{5}},
display columns/8/.style={select equal part entry of={4}{5}},
display columns/9/.style={select equal part entry of={4}{5}}
]\tabaut
\end{center}
\caption{Automorphism group sizes of SRGs with parameters $(63, 30, 13, 15)$.}

\label{tab:aut_63_30_13_15}
\end{table}

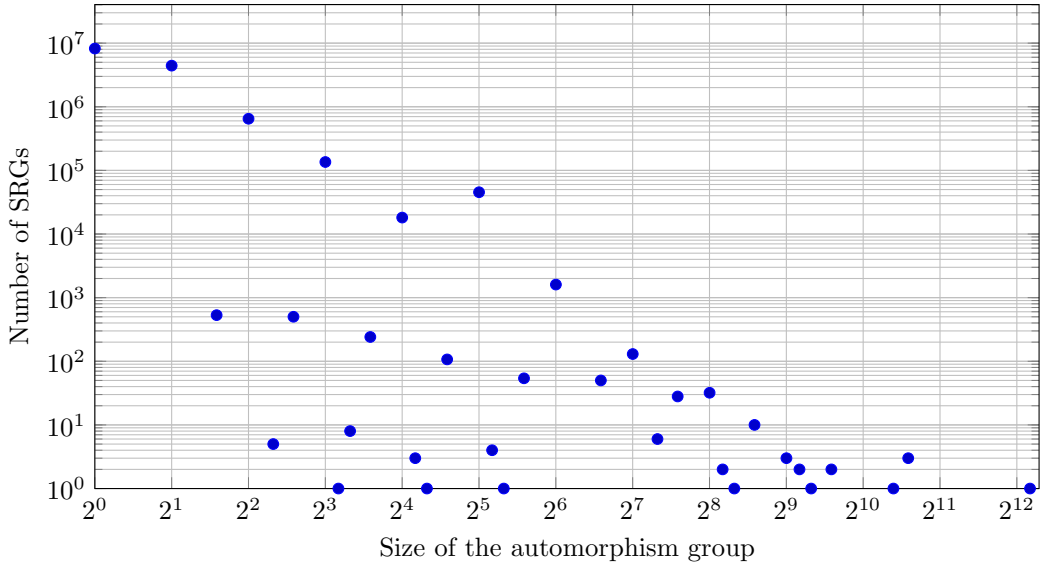
\begin{figure}[htbp]
\begin{center}
\footnotesize
\begin{tikzpicture}
\begin{loglogaxis}[
  xlabel=Size of the automorphism group,
  ylabel=Number of SRGs,
  grid=both,xmin=1,ymin=1,xmax=5000,log basis x=2]
\addplot table [only marks,y=nr, x=sz]{\tabaut};
\end{loglogaxis}
\end{tikzpicture}
\end{center}
\caption{Plotted automorphism group sizes.}
\label{fig:aut_63_30_13_15}
\end{figure}

The number of cliques of size $7$ can 
be found in Table \ref{tab:cl_63_30_13_15}. 
We also found graphs with no cliques 
of size $6$, but these examples are 
not reached in five steps.
While a partial geometry of type $pg(6,4,3)$ is known, none of the \pgfmathprintnumber{29017}
graphs with at least $45$ cliques belongs to a partial geometry.

\smallskip 

\pgfplotstableread{clq_szs_srg_63_30_13_15}\tabcl

\begin{table}[htbp]
\begin{center}
\footnotesize 
\pgfplotstabletypeset[
columns={sz,nr,sz,nr,sz,nr,sz,nr,sz,nr},
columns/sz/.style={column type=r,column name={Cls}},
display columns/0/.style={select equal part entry of={0}{6}},
display columns/1/.style={select equal part entry of={0}{6}},
display columns/2/.style={select equal part entry of={1}{6}},
display columns/3/.style={select equal part entry of={1}{6}},
display columns/4/.style={select equal part entry of={2}{6}},
display columns/5/.style={select equal part entry of={2}{6}},
display columns/6/.style={select equal part entry of={3}{6}},
display columns/7/.style={select equal part entry of={3}{6}},
display columns/8/.style={select equal part entry of={4}{6}},
display columns/9/.style={select equal part entry of={4}{6}},
display columns/10/.style={select equal part entry of={5}{6}},
display columns/11/.style={select equal part entry of={5}{6}}
]\tabcl
\end{center}
\caption{Cliques of size $7$ for parameters $(63, 30, 13, 15)$.}

\label{tab:cl_63_30_13_15}
\end{table}

\subsection{SRG(64,21,8,6)}

See Subsection \ref{sec:81_32_13_21} for a description of the graph $Bilin(2, 3, 2)$ which 
is our seed graph.
The search in \cite{Behbahani2012} found more than \pgfmathprintnumber{500000} SRGs, 
while the GM switching class of $Bilin(2, 3, 2)$ has only size \pgfmathprintnumber{76323}, therefore we omit any
further details.
We calculated the index chromatic number of a random subset of size 1000, but failed to find a counterexample
to the conjecture in \cite{Cioaba2018}, namely that the chromatic index of an SRG with $n$ even is always $k$ unless
the SRG is the Petersen graph.

\subsection{SRG(64,27,10,12)}

Let us give a short description of the graph $VO^-(2d,q)$.
Let $Q(x) = \alpha x_1^2 + \beta x_1x_2 + x_2^2 + \ldots + x_{2d}^2$ such that 
$\alpha x_1^2 + \beta x_1x_2 + x_2^2$ is irreducible over $\FF_q$. For $q=2$, we can choose
$(\alpha, \beta) = (1,1)$.
The vertices of $VO^-(2d,q)$ are the vectors of $\FF_q^{2d}$.
Two vertices $x,y$ are adjacent if $Q(x-y)=0$.
The graph $VO^-(6, 2)$ has an automorphism group of size \pgfmathprintnumber{3317760}.
More details on $VO^-(6, 2)$ can be found in \cite[\S10.25]{Brouwer2020}.

We find at least 9 SRGs with these parameters in \cite{ABH2019}.

\smallskip 

The number of graphs after applying GM switching up to $i$ times
can be found in Table \ref{tab:64_27_10_12}.

\begin{table}[htbp]
\begin{center}
\pgfplotstabletypeset[row sep=\\,col sep=&,
.style={column type=r},
columns/GM/.style={string type,column type=l,column name=$i$}
]{
GM    & 0 & 1 & 2  & 3  & 4  & 5  \\
New   & 1 & 2 & 43 & 2116 & 158036 & 8453779 \\
Total & 1 & 3 & 46 & 2162 & 160198 & 8613977 \\
}
\end{center}
\caption{SRGs with parameters $(64,27,10,12)$.}
\label{tab:64_27_10_12}
\end{table}

The automorphism group sizes are as in Table \ref{tab:aut_64_27_10_12}.
The first column denotes the size of the automorphism
group, the second column denotes the numbers of SRGs
with an automorphism group of that size.

\pgfplotstableread{aut_szs_srg_64_27_10_12}\tabautSRGfour
\begin{table}[htbp]
\begin{center}
\footnotesize
\pgfplotstabletypeset[
columns={sz,nr,sz,nr,sz,nr,sz,nr,sz,nr},
columns/sz/.style={column type=r,column name={$|G|$}},
display columns/0/.style={select equal part entry of={0}{5}},
display columns/1/.style={select equal part entry of={0}{5}},
display columns/2/.style={select equal part entry of={1}{5}},
display columns/3/.style={select equal part entry of={1}{5}},
display columns/4/.style={select equal part entry of={2}{5}},
display columns/5/.style={select equal part entry of={2}{5}},
display columns/6/.style={select equal part entry of={3}{5}},
display columns/7/.style={select equal part entry of={3}{5}},
display columns/8/.style={select equal part entry of={4}{5}},
display columns/9/.style={select equal part entry of={4}{5}}
]\tabautSRGfour
\end{center}
\caption{Automorphism group sizes of SRGs with parameters 
$(64, 27, 10, 12)$.}
\label{tab:aut_64_27_10_12}
\end{table}

The graph $VO^-(6, 2)$ is known to be $(K_5-e)$-free and its complement is $(K_7-e)$-free.
Therefore, it is a witness for the Ramsey number $R(K_5-e, K_7-e) \geq 65$, see \cite[\S10.25]{Brouwer2020}.
In fact, $R(K_5-e, K_7-e) = 65$.
Among the \pgfmathprintnumber{8613977} in our collection, it is the only graph with that property.
Indeed, it includes only 8 $K_5$-free graphs for which the complement is $K_7$-free.

\subsection{SRG(64,28,12,12)}

The graph $VO^+(2d,q)$ can be constructed the same way as $VO^-(2d,q)$ from the preceding section,
but with $(\alpha, \beta)$ chosen such that $\alpha x_1^2 + \beta x_1x_2 + x_2^2$ is reducible over $\FF_q$.
For $q=2$, we can choose $(\alpha, \beta)=(1,0)$.
The graph $VO^+(6, 2)$ has an automorphism group of size \pgfmathprintnumber{2580480}.
More details on $VO^+(6, 2)$ can be found in \cite[\S10.26]{Brouwer2020}.

We find a at least 9 SRGs with these parameters in \cite{ABH2019}.
We find 15 SRGs with these parameters in \cite{Kabanov2021}.

\smallskip 

The number of graphs after applying GM switching up to $i$ times
can be found in Table \ref{tab:64_28_12_12}.

\begin{table}[htbp]
\begin{center}
\pgfplotstabletypeset[row sep=\\,col sep=&,
.style={column type=r},
columns/GM/.style={string type,column type=l,column name=$i$}
]{
GM    & 0 & 1 & 2  & 3  & 4  & 5 \\
New   & 1 & 1 & 52 & 2680 & 201883 & 10858742 \\
Total & 1 & 3 & 55 & 2735 & 204618 & 11063360 \\
}
\end{center}
\caption{SRGs with parameters $(64,28,12,12)$.}
\label{tab:64_28_12_12}
\end{table}

The automorphism group sizes can be found in Table 
\ref{tab:aut_64_28_12_12}.
The first column denotes the size of the automorphism
group, the second column denotes the numbers of SRGs
with an automorphism group of that size.

\pgfplotstableread{aut_szs_srg_64_28_12_12}\tabautSRGfive
\begin{table}[htbp]
\begin{center}
\footnotesize 
\pgfplotstabletypeset[
columns={sz,nr,sz,nr,sz,nr,sz,nr},
columns/sz/.style={column type=r,column name={$|G|$}},
display columns/0/.style={select equal part entry of={0}{5}},
display columns/1/.style={select equal part entry of={0}{5}},
display columns/2/.style={select equal part entry of={1}{5}},
display columns/3/.style={select equal part entry of={1}{5}},
display columns/4/.style={select equal part entry of={2}{5}},
display columns/5/.style={select equal part entry of={2}{5}},
display columns/6/.style={select equal part entry of={3}{5}},
display columns/7/.style={select equal part entry of={3}{5}},
display columns/8/.style={select equal part entry of={4}{5}},
display columns/9/.style={select equal part entry of={4}{5}}
]\tabautSRGfive
\end{center}
\caption{Automorphism group sizes of SRGs with parameters 
$(64,28,12,12)$.}
\label{tab:aut_64_28_12_12}
\end{table}

\subsection{SRG(70,27,12,9)}

The classification of Steiner triple systems of order 21 \cite{Kokkala2020a,Kokkala2020} is a rich
source for a large number of SRGs with parameters $(70,27,12,9)$.
Maybe our list of \pgfmathprintnumber{78900835} SRGs is mostly disjoint 
to the \pgfmathprintnumber{13168639} SRGs in \cite{Kokkala2020} and the $\pgfmathprintnumber{83003869}$ SRGs in \cite{Kokkala2020a}
as the extra conditions in \cite{Kokkala2020a,Kokkala2020} appear to be restrictive.

Our seed graph belongs to a Steiner triple system on 21 points and has an automorphism group of size $126$.

\smallskip 

The number of graphs after applying GM switching up to $i$ times
can be found in Table \ref{tab:70_27_12_9}.

\begin{table}[htbp]
\begin{center}
\pgfplotstabletypeset[font=\footnotesize,row sep=\\,col sep=&,
.style={column type=r},
columns/GM/.style={string type,column type=l,column name=$i$}
]{
GM    & 0 & 1 & 2  & 3  & 4  & 5 & 6 & 7 & 8 & 9 & 10 \\
New   & 1 & 1 & 7 & 85 & 775 & 6094 & 43397 & 286285 & 1799283 & 10976064 & 65788843 \\
Total & 1 & 2 & 9 & 94 & 869 & 6963 & 50360 & 336645 & 2135928 & 13111992 & 78900835 \\
}
\end{center}
\caption{SRGs with parameters $(70, 27,12,9)$.}
\label{tab:70_27_12_9}
\end{table}

The automorphism group sizes can be found in Table \ref{tab:aut_70_27_12_9}.
The first row denotes the size of the automorphism
group, the second row denotes the numbers of SRGs
with an automorphism group of that size.

\pgfplotstableread{aut_szs_srg_70_27_12_9}\tabautSRGsix
\begin{table}[htbp]
\begin{center}
\pgfplotstabletypeset[
columns={sz,nr,sz,nr,sz,nr},
columns/sz/.style={column type=r,column name={$|G|$}},
display columns/0/.style={select equal part entry of={0}{3}},
display columns/1/.style={select equal part entry of={0}{3}},
display columns/2/.style={select equal part entry of={1}{3}},
display columns/3/.style={select equal part entry of={1}{3}},
display columns/4/.style={select equal part entry of={2}{3}},
display columns/5/.style={select equal part entry of={2}{3}}
]\tabautSRGsix
\end{center}
\caption{Automorphism group sizes of SRGs with parameters $(70,27,12,9)$.}
\label{tab:aut_70_27_12_9}
\end{table}

The complement of an SRG with parameters $(70, 27, 12, 9)$
can be the point graph of a partial geometry of type $pg(6,6,4)$.
An SRG belonging to such a partial geometry
has at least $70$ 
cocliques of size $7$ which pairwise meet in at most one vertex.
In the following, we list the number of the cocliques of size $7$.

\begin{table}[htbp]
\pgfplotstableread{coclq_szs_srg_70_27_12_9}\tabcocl
\begin{center}
\footnotesize 
\pgfplotstabletypeset[
columns={sz,nr,sz,nr,sz,nr,sz,nr,sz,nr},
columns/sz/.style={column type=r,column name={CoCls}},
display columns/0/.style={select equal part entry of={0}{5}},
display columns/1/.style={select equal part entry of={0}{5}},
display columns/2/.style={select equal part entry of={1}{5}},
display columns/3/.style={select equal part entry of={1}{5}},
display columns/4/.style={select equal part entry of={2}{5}},
display columns/5/.style={select equal part entry of={2}{5}},
display columns/6/.style={select equal part entry of={3}{5}},
display columns/7/.style={select equal part entry of={3}{5}},
display columns/8/.style={select equal part entry of={4}{5}},
display columns/9/.style={select equal part entry of={4}{5}}
]\tabcocl
\end{center}
\caption{Cocliques of size $7$ for parameters $(70,27,12,9)$.}
\label{tab:cl_70_27_12_9}
\end{table}

We found only two graphs with a sufficient amount of cocliques.
One has an automorphism group of size 6, one of size 1.
Both have at most 16 cocliques which pairwise meet in at most
one vertex. Hence, we do not obtain a $pg(6,6,4)$.

\subsection{SRG(81,24,9,6)}

A nice geometric graph with the given parameters can be obtained as follows.
Let $Q(x) = x_1^2-x_2^2+x_3^2+x_4^2$. The vertices are the vectors of $\FF_3^4$.
Two vertices $x$ and $y$ are adjacent if $Q(x-y) = 1$. This graph is also
known as $VNO^+(4,3)$ and has an automorphism group of size \pgfmathprintnumber{93312}.

We find 13 graphs with these parameters in \cite{Behbahani2012}.

\smallskip 

The number of graphs after applying WQH switching with a partition 
of type $3,3,n-6$ up to $i$ times can be found in Table 
\ref{tab:81_24_9_6}.

\begin{table}[htbp]
\begin{center}
\pgfplotstabletypeset[row sep=\\,col sep=&,
.style={column type=r},
columns/GM/.style={string type,column type=l,column name=$i$}
]{
GM    & 0 & 1 & 2  & 3  & 4 & 5 & 6  \\
New   & 1 & 2 & 31 & 596 & 15183 & 377270 & 7048525 \\
Total & 1 & 3 & 34 & 630 & 15813 & 393083 & 7441608 \\
}
\end{center}
\caption{SRGs with parameters $(81,24,9,6)$.}
\label{tab:81_24_9_6}
\end{table}

The automorphism group sizes can be found in Table 
\ref{tab:aut_81_24_9_6}.
The first column denotes the size of the automorphism
group, the second column denotes the numbers of SRGs
with an automorphism group of that size.

\pgfplotstableread{aut_szs_srg_81_24_9_6}\tabautSRGseven
\begin{table}[htbp]
\begin{center}
\footnotesize 
\pgfplotstabletypeset[
columns={sz,nr,sz,nr,sz,nr,sz,nr,sz,nr},
columns/sz/.style={column type=r,column name={$|G|$}},
display columns/0/.style={select equal part entry of={0}{5}},
display columns/1/.style={select equal part entry of={0}{5}},
display columns/2/.style={select equal part entry of={1}{5}},
display columns/3/.style={select equal part entry of={1}{5}},
display columns/4/.style={select equal part entry of={2}{5}},
display columns/5/.style={select equal part entry of={2}{5}},
display columns/6/.style={select equal part entry of={3}{5}},
display columns/7/.style={select equal part entry of={3}{5}},
display columns/8/.style={select equal part entry of={4}{5}},
display columns/9/.style={select equal part entry of={4}{5}}
]\tabautSRGseven
\end{center}
\caption{Automorphism group sizes of SRGs with parameters
$(81,24,9,6)$.}
\label{tab:aut_81_24_9_6}
\end{table}

\subsection{SRG(81,30,9,12)}

Van Lint and Schrijver discovered a partial geometry of type $pg(5,5,2)$
\cite{Lint1981}, the vL-S partial geometry. The point graph of this partial geometry is an SRG with 
parameters $(81, 30, 9, 12)$. Recently, a second partial geometry of the same
type was discovered by Kr\v{c}adinac \cite{Krcadinac2020} and,
almost at the same time, by Crnkovi\'{c}, \v{S}vob and Tonchev \cite{Crnkovic2020}.
More details on $VNO^-(4,3)$ can be found in \cite[\S10.29]{Brouwer2020}.

We can describe the SRG derived from the vL-S geometry as follows.
Let $Q(x) = x_1^2+x_2^2+x_3^2+x_4^2$. The vertices are the vectors of $\FF_3^4$.
Two vertices $x$ and $y$ are adjacent if $Q(x-y) = 1$. This graph is also
known as $VNO^-_4(3)$.

The number of graphs after applying WQH switching 
with a partition of type $3,3,n-6$ up to $i$ times
can be found in Table \ref{tab:81_30_9_12}.
Further applications of the switching operation do not yield 
more graphs.

\begin{table}[htbp]
\begin{center}
\pgfplotstabletypeset[row sep=\\,col sep=&,
.style={column type=r},
columns/GM/.style={string type,column type=l,column name=$i$}
]{
GM    & 0 & 1 & 2  & 3   & 4    & 5     & 6      & 7      \\
New   & 1 & 2 & 21 & 144 & 1249 & 12560 & 107665 & 691650 \\
Total & 1 & 3 & 24 & 168 & 1417 & 13977 & 121642 & 813292 \\
}
\end{center}
\begin{center}
\pgfplotstabletypeset[row sep=\\,col sep=&,
.style={column type=r},
columns/GM/.style={string type,column type=l,column name=$i$}
]{
GM    & 8       & 9        & 10       & 11       & 12 \\
New   & 2957467 & 7041075  & 4892852  & 835010   & 25742 \\
Total & 3770759 & 10811834 & 15704686 & 16539696 & 16565438 \\
}
\end{center}

\caption{SRGs with parameters $(81,30,9,12)$.}
\label{tab:81_30_9_12}
\end{table}

The automorphism group sizes can be found 
in Table \ref{tab:aut_81_30_9_12}.
The first column denotes the size of the automorphism
group, the second column denotes the numbers of SRGs
with an automorphism group of that size.

\pgfplotstableread{aut_szs_srg_81_30_9_12}\tabautSRGeight
\begin{table}[htbp]
\begin{center}
\footnotesize 
\pgfplotstabletypeset[
columns={sz,nr,sz,nr,sz,nr,sz,nr},
columns/sz/.style={column type=r,column name={$|G|$}},
display columns/0/.style={select equal part entry of={0}{5}},
display columns/1/.style={select equal part entry of={0}{5}},
display columns/2/.style={select equal part entry of={1}{5}},
display columns/3/.style={select equal part entry of={1}{5}},
display columns/4/.style={select equal part entry of={2}{5}},
display columns/5/.style={select equal part entry of={2}{5}},
display columns/6/.style={select equal part entry of={3}{5}},
display columns/7/.style={select equal part entry of={3}{5}},
display columns/8/.style={select equal part entry of={4}{5}},
display columns/9/.style={select equal part entry of={4}{5}}
]\tabautSRGeight
\end{center}
\caption{Automorphism group sizes of SRGs with parameters
$(81,30,9,12)$.}
\label{tab:aut_81_30_9_12}
\end{table}

Our seed graph, the point graph of the vL-S partial geometry
has an automorphism group of size \pgfmathprintnumber{116640}. By comparing
automorphism group sizes, we see that our list cannot contain all of the 14
new SRGs described in \cite{Crnkovic2020}.

A partial geometry $pg(5,5,2)$ necessarily has at least 81 cliques of size 6 
which pairwise meet in at most one vertex.
Our search produced 38 SRGs with sufficiently many cliques of size 6, see Table \ref{tab:cl_81_30_9_12}. Only the ones
corresponding to the vL-S partial geometry and the Kr\v{c}adinac partial geometry
are point graphs of partial geometries. Hence, we rediscover the
Kr\v{c}adinac partial geometry via a third method.
The distance between the vL-S partial geometry and the Kr\v{c}adinac partial geometry
is 6 using WQH switchings with $|C_1|=|C_2|=3$.

\pgfplotstableread{clq_szs_srg_81_30_9_12}\tabclqsSRGeight
\begin{table}[htbp]
\begin{center}
\footnotesize 
\pgfplotstabletypeset[
columns={sz,nr,sz,nr,sz,nr,sz,nr,sz,nr,sz,nr},
columns/sz/.style={column type=r,column name={Cls}},
display columns/0/.style={select equal part entry of={0}{6}},
display columns/1/.style={select equal part entry of={0}{6}},
display columns/2/.style={select equal part entry of={1}{6}},
display columns/3/.style={select equal part entry of={1}{6}},
display columns/4/.style={select equal part entry of={2}{6}},
display columns/5/.style={select equal part entry of={2}{6}},
display columns/6/.style={select equal part entry of={3}{6}},
display columns/7/.style={select equal part entry of={3}{6}},
display columns/8/.style={select equal part entry of={4}{6}},
display columns/9/.style={select equal part entry of={4}{6}},
display columns/10/.style={select equal part entry of={5}{6}},
display columns/11/.style={select equal part entry of={5}{6}}
]\tabclqsSRGeight
\end{center}
\caption{Cliques of size $6$ for parameters $(81,30,9,12)$.}
\label{tab:cl_81_30_9_12}
\end{table}


\subsection{SRG(81, 32, 13,12)}\label{sec:81_32_13_21}

The graph $Bilin(2, m-2, q)$, $n \geq 4$, can be described as follows.
The vertices are the set of all $2$-spaces of $\FF_q^{m}$ which are disjoint
to a fixed $(m-2)$-space. Two $2$-spaces are adjacent if their meet is a $1$-space.
This yields an SRG. For $n=4$ and $q=3$, its parameters are $(81, 32, 13, 12)$
and it has an automorphism group of size \pgfmathprintnumber{186624}.

\smallskip 

The number of graphs after applying WQH switching with a partition 
of type $3,3,n-6$ up to $i$ times can be found in Table
\ref{tab:81_32_13_12}.
The automorphism group sizes can be found in Table 
\ref{tab:aut_81_32_13_12}.
The first row denotes the size of the automorphism
group, the second row denotes the numbers of SRGs
with an automorphism group of that size.

\begin{table}[htbp]
\begin{center}
\pgfplotstabletypeset[row sep=\\,col sep=&,
.style={column type=r},
columns/GM/.style={string type,column type=l,column name=$i$}
]{
GM    & 0 & 1 & 2  & 3   & 4     & 5      & 6      \\
New   & 1 & 2 & 41 & 963 & 29120 & 841699 & 20520777\\
Total & 1 & 3 & 44 & 1007 & 30127 & 871826 & 21392603 \\
}
\end{center}
\caption{SRGs with parameters $(81,32,13,12)$.}
\label{tab:81_32_13_12}
\end{table}

\pgfplotstableread{aut_szs_srg_81_32_13_12}\tabautSRGnine
\begin{table}[htbp]
\begin{center}
\footnotesize
\pgfplotstabletypeset[
columns={sz,nr,sz,nr,sz,nr,sz,nr,sz,nr},
columns/sz/.style={column type=r,column name={$|G|$}},
display columns/0/.style={select equal part entry of={0}{5}},
display columns/1/.style={select equal part entry of={0}{5}},
display columns/2/.style={select equal part entry of={1}{5}},
display columns/3/.style={select equal part entry of={1}{5}},
display columns/4/.style={select equal part entry of={2}{5}},
display columns/5/.style={select equal part entry of={2}{5}},
display columns/6/.style={select equal part entry of={3}{5}},
display columns/7/.style={select equal part entry of={3}{5}},
display columns/8/.style={select equal part entry of={4}{5}},
display columns/9/.style={select equal part entry of={4}{5}}
]\tabautSRGnine
\end{center}
\caption{Automorphism group sizes of SRGs with parameters 
$(81, 32, 13, 12)$.}
\label{tab:aut_81_32_13_12}
\end{table}

\subsection{SRG(85, 20, 3, 5)}\label{sec:85_20_3_5}

See Subsection \ref{sec:63_30_13_15} for a description of the graph $Sp(4, 4)$.
It has an automorphism group of size \pgfmathprintnumber{1958400}.

\smallskip 

The number of graphs after applying WQH switching with 
a partition of type $4,4,n-8$ up to $i$ times
can be found in Table \ref{tab:85_20_3_5}.
Van Dam and Guo provide \pgfmathprintnumber{127433}
graphs with parameters $(85, 20, 3, 5)$
in \cite{vDG2022}. The list of graphs here shares precisely
\pgfmathprintnumber{3501} entries with their list.

\begin{table}[htbp]
\begin{center}
\pgfplotstabletypeset[row sep=\\,col sep=&,
.style={column type=r},
columns/GM/.style={string type,column type=l,column name=$i$}
]{
GM    & 0 & 1 & 2  & 3   & 4     & 5      \\
New   & 1 & 1 & 16 & 442 & 12303 & 225024 \\
Total & 1 & 2 & 18 & 460 & 12763 & 237787 \\
}
\end{center}
\caption{SRGs with parameters $(85,20,3,5)$.}
\label{tab:85_20_3_5}
\end{table}

The automorphism group sizes can be found 
in Table \ref{tab:aut_85_20_3_5}.
The first column denotes the size of the automorphism
group, the second column denotes the numbers of SRGs
with an automorphism group of that size.

\pgfplotstableread{aut_szs_srg_85_20_3_5}\tabautSRGten
\begin{table}[htbp]
\begin{center}
\footnotesize 
\pgfplotstabletypeset[
columns={sz,nr,sz,nr,sz,nr,sz,nr},
columns/sz/.style={column type=r,column name={$|G|$}},
display columns/0/.style={select equal part entry of={0}{5}},
display columns/1/.style={select equal part entry of={0}{5}},
display columns/2/.style={select equal part entry of={1}{5}},
display columns/3/.style={select equal part entry of={1}{5}},
display columns/4/.style={select equal part entry of={2}{5}},
display columns/5/.style={select equal part entry of={2}{5}},
display columns/6/.style={select equal part entry of={3}{5}},
display columns/7/.style={select equal part entry of={3}{5}},
display columns/8/.style={select equal part entry of={4}{5}},
display columns/9/.style={select equal part entry of={4}{5}}
]\tabautSRGten
\end{center}
\caption{Automorphism group sizes of SRGs with 
parameters $(85,20,3,5)$.}
\label{tab:aut_85_20_3_5}
\end{table}

\subsection{SRG(96, 19, 2, 4)}\label{sec:96_19_2_4}

See \cite[\S8.A]{Brouwer1982} for a construction of graphs of
type Haemers($q$). Note that even for fixed $q$, this does not
uniquely determine the graph. Our seed graph has an automorphism
group of size \pgfmathprintnumber{9216}.

In \cite{GMV2006} we find 2 graphs with these parameters.
Surely, there are many more as several constructions
are known and the constructions of type Haemers($4$) 
allow for some freedom.

\smallskip 

The number of graphs after applying WQH switching with 
a partition of type $4,4,n-8$ up to $i$ times
can be found in Table \ref{tab:96_19_2_4}.

\begin{table}[htbp]
\begin{center}
\pgfplotstabletypeset[row sep=\\,col sep=&,
.style={column type=r},
columns/GM/.style={string type,column type=l,column name=$i$}
]{
GM    & 0 & 1 & 2  & 3   & 4     & 5     & 6   \\
New   & 1 & 2 & 17 & 160 & 1680  & 17578 & 158602 \\
Total & 1 & 3 & 20 & 180 & 1860  & 19438 & 178040 \\
}
\end{center}
\caption{SRGs with parameters $(96,19,2,4)$.}
\label{tab:96_19_2_4}
\end{table}

The automorphism group sizes can be found in Table 
\ref{tab:aut_96_19_2_4}.
The first column denotes the size of the automorphism
group, the second column denotes the numbers of SRGs
with an automorphism group of that size.

\pgfplotstableread{aut_szs_srg_96_19_2_4}\tabautSRGeleven
\begin{table}[htbp]
\begin{center}
\footnotesize 
\pgfplotstabletypeset[
columns={sz,nr,sz,nr,sz,nr,sz,nr},
columns/sz/.style={column type=r,column name={$|G|$}},
display columns/0/.style={select equal part entry of={0}{5}},
display columns/1/.style={select equal part entry of={0}{5}},
display columns/2/.style={select equal part entry of={1}{5}},
display columns/3/.style={select equal part entry of={1}{5}},
display columns/4/.style={select equal part entry of={2}{5}},
display columns/5/.style={select equal part entry of={2}{5}},
display columns/6/.style={select equal part entry of={3}{5}},
display columns/7/.style={select equal part entry of={3}{5}},
display columns/8/.style={select equal part entry of={4}{5}},
display columns/9/.style={select equal part entry of={4}{5}}
]\tabautSRGeleven
\end{center}
\caption{Automorphism group sizes of SRGs with 
parameters $(96,19,2,4)$.}
\label{tab:aut_96_19_2_4}
\end{table}

\subsection{SRG(96, 20, 4, 4)}\label{sec:96_20_4_4}

Our seed graph is the point graph of the 
unique generalized quadrangle of order $(5, 3)$
and has a group of size \pgfmathprintnumber{138240}.
In \cite{GMV2006} we find 6 graphs with these parameters.
Surely, there are many more as plenty constructions
are known, but we are unaware of any counts.

The number of graphs after applying WQH switching with 
a partition of type $4,4,n-8$ up to $i$ times
can be found in Table \ref{tab:96_20_4_4}.

\begin{table}[htbp]
\begin{center}
\pgfplotstabletypeset[row sep=\\,col sep=&,
.style={column type=r},
columns/GM/.style={string type,column type=l,column name=$i$}
]{
GM    & 0 & 1 & 2  & 3   & 4     & 5     & 6   \\
New   & 1 & 2 & 13 & 95  & 949   & 10773 & 121172 \\
Total & 1 & 3 & 16 & 111 & 1060  & 11833 & 133005 \\
}
\end{center}
\caption{SRGs with parameters $(96, 20, 4, 4)$.}
\label{tab:96_20_4_4}
\end{table}

The automorphism group sizes can be found 
in Table \ref{tab:aut_96_20_4_4}.
The first column denotes the size of the automorphism
group, the second column denotes the numbers of SRGs
with an automorphism group of that size.

\pgfplotstableread{aut_szs_srg_96_20_4_4}\tabautSRGtwelve
\begin{table}[htbp]
\begin{center}
\footnotesize
\pgfplotstabletypeset[
columns={sz,nr,sz,nr,sz,nr,sz,nr},
columns/sz/.style={column type=r,column name={$|G|$}},
display columns/0/.style={select equal part entry of={0}{5}},
display columns/1/.style={select equal part entry of={0}{5}},
display columns/2/.style={select equal part entry of={1}{5}},
display columns/3/.style={select equal part entry of={1}{5}},
display columns/4/.style={select equal part entry of={2}{5}},
display columns/5/.style={select equal part entry of={2}{5}},
display columns/6/.style={select equal part entry of={3}{5}},
display columns/7/.style={select equal part entry of={3}{5}},
display columns/8/.style={select equal part entry of={4}{5}},
display columns/9/.style={select equal part entry of={4}{5}}
]\tabautSRGtwelve
\end{center}
\caption{Automorphism group sizes of SRGs 
with parameters $(96, 20, 4, 4)$.}
\label{tab:aut_96_20_4_4}
\end{table}

\section{Future Work}

It might be very fruitful to use switching to optimize SRGs for a certain parameter. For instance, switching embeds 
naturally in a threshold accepting algorithm. Note that for Steiner triple systems, one can find a similar 
suggestion in \cite{Kaski2011}.

Our investigation is incomplete in at least two ways. Firstly, we might not have checked all known SRGs with less than 100
vertices for the considered switchings
(as there are too many constructions known).
Secondly, surely there exist SRGs for some of the parameters which are at the time of writing unknown.
Here is a list of all sets of parameters for which SRGs are known, but we failed at finding a graph for
which our switching works. Note that we did not investigate parameter sets which are completely classified.
\begin{align*}
 &(37, 18, 8, 9), &&(41, 20, 9, 10), && (45, 22, 10, 11),&& (49, 24, 11, 12),\\
 &(50, 21, 8, 9), &&(53, 26, 12, 13), &&(65, 32, 15, 16),&& (70, 27, 12, 9), \\
 &(73, 36, 17, 18),&&(81, 40, 19, 20), &&(82, 36, 15, 16),&& (89, 44, 21, 22)\\
 &(97, 48, 23, 24), &&(99, 48, 22, 24).
\end{align*}

\paragraph*{Acknowledgements} The author is supported by a postdoctoral fellowship of the Research Foundation - Flanders (FWO).
The author thanks Andries E.~Brouwer, Gordon Royle, and the second referee for comments and suggestions on earlier drafts of this paper.
The author thanks Akihiro Munemasa for the proof of Lemma \ref{lem:am_proof}.

\appendix

\section{Ambiguous Seed Graphs}

Here we list the used seed graphs for the less beautiful seed graphs.

\subsection{SRG(57,24,11,9)}

{\footnotesize
\begin{verbatim*}
x`MjkWRlZLZHuJY^J]~NvkT?^_KB[Sl_LAimY_Wxy_WFSGj`M_zopIn|?ZeaYYMo{Ceu
NC\Lap{\_]???^~{LiEiOKMaiaLQISj?taGIsd[cIMQLRoHMSMq[`wg@@PUDl@xpcG[p
@|QlDCSeedCiHOrJ_yOOwdzLASGw`zhrE_OjhCwlACKySW?Q@|[ouOBOBtuPrHaFGHuO
eb?sYB[ob`KOe_u@rbE@jGHoMY[@p{_deCTgBbe_qqCVgBb_]E@rK{?????~~~~
\end{verbatim*}
\par}

\subsection{SRG(70,27,12,9)}

{\footnotesize
\begin{verbatim}
~?@EQd_pJPwdUi{chWU`w]W^hm`Xt?}pX^HYwRu\G}WF~~sLX?Sdp?kd_HmLGAtS`BFT
WGdYLGqAy[HaIpe]?NADaiCLQcXOc}pO_`Z{aaCI^eGbhlGXCRTqcF_@}?N{RWiY[QPf
Eal`_???F~~{TDQGdcAIbKCXaAdOa`[cBUgGSMe?ss[_EIOakhi?cW`SqET?`ESzpHe@
?havWEX_oEWWXURAHe_ChDpg_rW?YNoo?pwK?oxkKQeABD[PqQLHADAyakGXOCHoI}V@
Dg?`i@^\BgJQ_GbqIIV@RD@@UooE?@hQ_|h_tH?ASl?zUOy?R@PhOJ_FeoBGITgCw@xr
?eAa?^JUORW@cDG@{ueAMW?F`tgC\KCMg?F`ySAiqAVS?Bow@{c?{o~???????~~~~w
\end{verbatim}
\par}

\subsection{SRG(96,19,2,4)}

{\footnotesize
\begin{verbatim}
~?@_????CA?[_K?C?G?S_Ao???q@cACo?W?AA??OO??oEHC_WQCCGOGGCO_?Ew?CQ?d?P
_G[?S?d?@O?s?ScY??D`J??@EGE@OPG`GDE?@o??@OD@_`A_GaP@PC??a_aG_?OSH`IA?
Q@@GSGAOC?WBD?S?GWCg_g?Q?CgOPJ?OAEGAKoCCSAH?OgGGgHCAD?cO_CPG@_W_OG_oA
OAGDCO@C?`C?oa?__Pg@?O?oCC`OG@?C__`?`AG`?AC?x?cD@?CA@WB?wKe?K?Ka@__aK
g?SaGQGAaQ_DAEE?M??_^_??GOAQC`I@O?C_@cAQDA_??EECKe?W?WKKA_SI?@G?AQOWD
@_?E?B??N~??????F}???????@}@QpO?OOa?K?GUcW?OOa?H?CoCS?@OaaG_CE@D??DAI
Ga?OAa_PC@@CY?@?II@@GC@Cw?C?C`X?`GCU????CHQOGWCS_???G?????????F~~?BBo
@_?oK?w_AGPGPa?_O@?QDAHCPDG?`?C?SD@_??@~w????Ko?I????AYeT@CB@@O????XU
Y_`@GO@~_?????@xw?@oOIOOB_Gp?QG?OOOh?OF?aP?QG@?IR?E_GQA?OGK?_Ap_CgGO_
_OCH?_DG_hO`?G?gA_@??UGGdAOA?I?a?O?EPPPCG@O`???iG?BDDAO_D?c??AgO????B
_?wW?Fo?B_?K??AHO_P_gs?QOA_??@@aOGcDJ?BC@O???O@KE?]?wB?_?o???????????
?@~~~
\end{verbatim}
\par}


\end{document}